\newcommand {\Real}{\mathbb{R}}
\newcommand{\mco}{\mathcal{O}}
\newtheorem{rem}{Remark}
\numberwithin{equation}{section}
\newtheorem{theorem}{Theorem}
\title[Unconditional wave decay in dimension two] {Unconditional wave decay in dimension two}
\author{T. J. Christiansen, K. Datchev, P. Morales, and M.  Yang}
\address{Department of Mathematics, University of Missouri, Columbia, MO 65211 USA}
\email{christiansent@missouri.edu}
\address{Department of Mathematics, Purdue University, West Lafayette, IN 47907 USA}
\email{kdatchev@purdue.edu}
\address{Department of Mathematics, Purdue University, West Lafayette, IN 47907 USA}
\email{moralep@purdue.edu}
\address{Department of Mathematics, University of California, Berkeley, CA 97420 USA}
\email{mxyang@math.berkeley.edu}
\begin{document}
\begin{abstract}We extend Burq's logarithmic  wave decay rate \cite{burq98} to general compactly supported scatterers in dimension two. The main novelty is using recent results on low-frequency expansions to remove the requirement that the spectrum be regular at zero. This allows us to include, among other examples, arbitrary smooth obstacles with variable boundary conditions.
\end{abstract}
\maketitle

\section{Introduction}

Accurate estimation and computation of decay rates is an important problem in the study of linear and nonlinear waves \cite{dz,sch21,hh,bs}. In \cite{burq98}, Burq showed that, for Dirichlet obstacles in $\mathbb R^d$, waves decay at least logarithmically fast, \textit{regardless} of the shape of the obstacle. This optimal universal estimate has since been generalized very widely (see \cite{mos,shapiro24,gh}  for some recent results and references), but always requiring suitable regularity of the spectrum at low frequencies.

Such spectral regularity holds naturally for many examples, such as the Dirichlet or Neumann Laplacian on an exterior domain. But not always: in general, lower order terms and variable boundary conditions contribute nontrivially in dimension two. In this paper, we use recent results on wave expansions \cite{cd2,cdy} to remove the regularity requirement, and prove an unconditional result. We focus on $d=2$ because this dimension has the richest low frequency phenomena, but Theorem \ref{t:main} below is an abstract result applicable in any dimension, and also in non-Euclidean (e.g.~conic and Aharonov--Bohm) scattering, as in \cite{cdy}.

Our motivating example is the classical obstacle problem. Let $\mathcal B \subset \mathbb R^2$ be a ball, and let $\mathcal O \subset \mathcal B$ be an \textit{obstacle}: i.e. a (possibly empty) open set with smooth boundary, such that $\Omega = \mathbb R^2 \setminus \overline{\mathcal O}$ is connected. Let $P$ be an elliptic, self-adjoint, differential operator on $\Omega$ which equals the Laplacian away from $\mathcal B$. In this paper, we compute asymptotics for the wave equation associated to $P$.

More precisely, let
\begin{equation}
    \label{eq:operator}
     P = -\sum_{j,k=1}^2 a_{j,k}(x) \partial_{x_j}\partial_{x_k} + \sum_{j=1}^2 b_j(x) \partial_{x_j} + V(x),
\end{equation}
where $a_{j,k} - \delta_{j,k}$, $b_j$, and $V$ are all in $C_c^\infty(\mathbb R^2)$, and  the matrix $a_{j,k}(x)$ is positive definite for all $x$. We write the boundary $\partial \Omega$ as
\[\partial \Omega = \Gamma_D \cup \Gamma_N,\]
where $\Gamma_D$ and $\Gamma_N$ are separated (and either or both may be empty). Let $\gamma  \in C^\infty(\Gamma_N)$, and equip $P$ with the domain
\begin{equation}
    \label{eq:domain}
     \mathcal D = \{ u \in H^2(\Omega) \colon u|_{\Gamma_D} = 0, \ (\partial_\nu u + \gamma u)|_{\Gamma_N} = 0\},
\end{equation}
where $\partial_\nu$ is the normal derivative to the boundary with respect to the metric $g = (a_{j,k})^{-1}$. 

\begin{theorem}\label{t:intro}
Let $P$ be as in \eqref{eq:operator}, and suppose that $P$ is self-adjoint  on $L^2(\Omega)$ with domain $\mathcal D$. Let $f \in C_c^\infty(\Omega)$, and let $u \in C^\infty(\Omega \times \mathbb R)$ be the solution to 
\[
(\partial_t^2 + P)u(x,t) = 0, \qquad u|_{t=0} = 0, \quad \partial_t u|_{t=0} =f, \quad u(\cdot,t) \in \mathcal D.
\]
Then
\[
 u(x,t) = u_d(x,t) + u_z(x,t) + u_r(x,t).
\]
Here
\begin{enumerate}
\item[(i)] $u_d$ is the contribution of any negative eigenvalues: 
\[u_d(x,t) = \sum_{\ell=1}^{N_E} \frac{\sinh(\sqrt{-E_\ell}t)}{\sqrt{-E_\ell}}(\Pi_\ell f)(x),\] where $E_1 < \cdots < E_{N_E} <0$ are the negative eigenvalues of $P$, and $\Pi,\dots,\Pi_{N_E}$ the corresponding $L^2(\Omega)$ projections.
\item[(ii)] $u_z$ is the contribution of zero energy:
\begin{equation}
    \label{eq:uz}
     u_z(x,t) = t (\Pi_0f)(x) + \sum_{m=1}^M \langle f, U_{\omega_m}\rangle\mathfrak{J}_m(t) U_{\omega_m}(x),
\end{equation}
where $\Pi_0\colon L^2(\Omega) \to L^2(\Omega)$ the projection onto the zero eigenspace of $P$, $M \in \{0,1,2\}$,  $\mathfrak{J}_m$ is defined in \eqref{eq:Jm} and satisfies $\mathfrak{J}_m(t) = t((\log t)^{-1} + O((\log t)^{-2}))$ as $t \to \infty$, and the $U_{\omega_m}$ are certain $p$-resonant states. 
\item[(iii)] $u_{r}$ is a remainder, tending to zero logarithmically fast, in the sense that for every compact set $K \subset \Omega$, positive number $M$ and multiindex $\alpha$, we have
\begin{equation}\label{e:ur}
\lim_{t \to \infty} \log(t)^M\max_{x \in K} |\partial^\alpha u_r(x,t)| = 0.
\end{equation}
\end{enumerate}
\end{theorem}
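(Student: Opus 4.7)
The plan is to represent $u(\cdot,t)$ via the functional calculus as a contour integral in the spectral parameter, and then to split this integral into three pieces matching the three summands in the conclusion. Stone's formula for the self-adjoint operator $P$ gives
\[
u(\cdot,t) = u_d(\cdot,t) + \frac{1}{2\pi i}\int_0^\infty \frac{\sin(t\sqrt{E})}{\sqrt{E}}\bigl[R(E+i0) - R(E-i0)\bigr]f\, dE,
\]
where $R(E) = (P - E)^{-1}$ and $u_d$ collects the contributions of the finitely many negative eigenvalues (an immediate residue/eigenfunction computation, using $\sin(t\sqrt{E})/\sqrt{E} = \sinh(t\sqrt{-E})/\sqrt{-E}$ for $E<0$). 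With the substitution $\lambda^2 = E$ the spectral integral becomes a contour integral against the outgoing resolvent $R(\lambda) = (P-\lambda^2)^{-1}$, which extends meromorphically from $\{\mathrm{Im}\,\lambda>0\}$ onto a logarithmic cover across the real axis. The remaining task is to analyze this oscillatory integral.

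Next I would introduce smooth cutoffs $\chi_z(\lambda)$ supported in a small neighborhood of $\lambda = 0$ and $\chi_r(\lambda) = 1 - \chi_z(\lambda)$, splitting the integral as $I_z + I_r$. For $I_z$ I would substitute the two-dimensional low-frequency expansion of $R(\lambda)$ from \cite{cd2,cdy}. In dimension two that expansion is singular at $\lambda = 0$: it contains a pole whose residue carries the zero-eigenspace projection $\Pi_0$, together with up to two threshold singularities of the form $(\log \lambda)^{-m}$ whose residues carry the $p$-resonant states $U_{\omega_m}$. Integrating the singular terms against $\sin(t\lambda)\chi_z(\lambda)$, the pole produces the secular contribution $t\Pi_0 f$, while each logarithmic singularity produces precisely the function $\mathfrak{J}_m(t)$ from \eqref{eq:uz} multiplied by $\langle f, U_{\omega_m}\rangle U_{\omega_m}$. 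The regular part of the low-frequency expansion is absorbed into $u_r$; to get decay faster than any prescribed $(\log t)^{-M}$ I would retain sufficiently many terms of the expansion and integrate the smooth remainder against $\sin(t\lambda)$ by parts.

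For the high-frequency piece $I_r$ I would apply Burq's contour-shifting argument from \cite{burq98}. By the Carleman resolvent estimates on $\Omega$, the cutoff resolvent $\chi R(\lambda) \chi$, with $\chi \in C_c^\infty(\Real^2)$ equal to one on $\mathrm{supp}\, f$ and on the chosen compact set $K$, continues meromorphically into a logarithmic strip $\{\mathrm{Im}\,\lambda > -c/\log|\lambda|\}$ and grows at most like $e^{C|\lambda|}$ there, with no resonances sufficiently close to the real axis. Shifting the contour into this strip, the exponential decay of $\sin(t\lambda)$ in the lower half-plane dominates the resolvent growth and produces decay faster than any inverse power of $\log t$, uniformly on compact sets and for any finite number of $x$-derivatives. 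Because the threshold singularities have already been subtracted off into $u_z$, no spectral-regularity hypothesis at zero is needed for this step.

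The main obstacle will be the low-frequency analysis: writing the singular part of the resolvent expansion from \cite{cd2,cdy} precisely enough that the Stone-formula integral against $\sin(t\lambda)$ produces the claimed closed form $\mathfrak{J}_m(t)$ with asymptotics $t((\log t)^{-1} + O((\log t)^{-2}))$, and verifying that each additional term kept from the expansion contributes an extra factor of $(\log t)^{-1}$ to the remainder estimate \eqref{e:ur}. Once the low-frequency singularities are cleanly separated in this way, the high-frequency step is essentially Burq's original contour-deformation argument, now applicable unconditionally because the usual regularity assumption at zero has been replaced by the explicit expansion of $R(\lambda)$.
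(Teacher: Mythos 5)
Your overall strategy---Stone's formula, splitting off the negative eigenvalues, substituting the two-dimensional low-energy expansion of $\chi R(\lambda)\chi$ from \cite{cd2,cdy} to produce $t\,\Pi_0 f$ and the $\mathfrak{J}_m$ terms, and a Burq-type contour deformation for the rest---is the same as the paper's (which packages the contour argument as the abstract Theorem \ref{t:main}). But your high-frequency step has a genuine gap. First, Burq's resonance-free region is not $\{\mathrm{Im}\,\lambda > -c/\log|\lambda|\}$ but the exponentially thin set $\{\mathrm{Im}\,\lambda > -\tfrac1C e^{-C'|\mathrm{Re}\,\lambda|}\}$, on which the bound is $\|\chi R(\lambda)\chi\|\le C|\lambda|^{-2} e^{C'|\mathrm{Re}\,\lambda|}$. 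If you deform the entire high-frequency integral $I_r$ (supported on all $|\lambda|\ge \eta$) to the bottom of that region, then at $|\mathrm{Re}\,\lambda|=L$ the oscillatory factor gains only $e^{-t C^{-1}e^{-C'L}}$, which tends to $1$ as $L\to\infty$ for fixed $t$, while the resolvent bound grows like $e^{C'L}$; the shifted integral is not even absolutely convergent, and no decay in $t$ results. The domination you invoke fails as soon as $L\gtrsim \log t$.

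The missing ingredient is a $t$-dependent frequency truncation. The paper cuts at $|\lambda|\le r(t)=\log(t)/A$ with $A>C'$ using the sharp spectral projection $\mathbf{1}_{[0,r(t)^2]}(P)$, deforms the contour only over that bounded window (where $e^{-t\gamma(r(t))}=e^{-C^{-1}t^{1-C'/A}}$ crushes $e^{C'r(t)}=t^{C'/A}$), and estimates the complementary piece $\mathbf{1}_{(r(t)^2,\infty)}(P)\,\sin(t\sqrt P)P^{-1/2}\langle P\rangle^{-s}$ directly by the functional calculus, yielding $O\bigl(r(t)^{-2(s+p-q)-1}\bigr)=O\bigl((\log t)^{-2(s+p-q)-1}\bigr)$. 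This is where the Sobolev regularity of $f$ enters and why the remainder beats every power of $\log t$ for $f\in C_c^\infty$; your proposal never uses the regularity of $f$ in the high-frequency regime and so has no mechanism to produce the rate in \eqref{e:ur}. Two smaller omissions: passing from the abstract $\mathcal D^q$ bounds to the pointwise bounds on $\partial^\alpha u_r$ over compact $K$ requires interior elliptic regularity plus Sobolev embedding, as in the paper; and your smooth frequency cutoff $\chi_z$ would have to be matched with the deformed contour, which is why the paper instead uses sharp spectral cutoffs together with the keyhole contour $\Gamma_\eta$ around the origin.
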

The remainder bound \eqref{e:ur} follows from a more precise estimate bounding each Sobolev norm of the remainder $u_r$ in terms of a Sobolev norm of the initial data $f$; see Theorem~\ref{t:main} below. It is proved using Burq's universal exponential resolvent bound \cite{burq98, burq02}. This exponential bound is known to hold for broader classes of operators, and in particular we may allow $P$ to be an operator on a manifold, and to have rougher coefficients; see \cite{mps20,llr,dgs, o24, shapiro24,gras,gh,v25} for some recent results and further references in this active area.

It is well-known that a remainder bound of the form \eqref{e:ur} is optimal at our level of generality. This follows from an argument of Ralston \cite{r69}, deducing wave non-decay from sequences of quasimodes; see Theorem 1.2 of \cite{dmst} for a recent result of this kind in a setting close to the present one, and \cite{hs,keir,gh} for recent such results in general relativity.

The most interesting terms in our expansions are those in \eqref{eq:uz} coming from $p$-resonances at zero, when $P$ has these. Recall that $u\in \mathcal{D}_{\operatorname{loc}}$ is a $p$-resonant state if $Pu = 0$ and $u \in L^q(\Omega) \Leftrightarrow q>2$, i.e. $u$ just barely fails to be in $L^2(\Omega)$. Such resonances can occur when we allow variable lower order terms in the coefficients of the operator or the boundary condition, as in the case of the Robin (impedance) boundary condition. See Section \ref{sec:p-res} for examples.

With more control of high frequencies, we can continue the expansion. If the resolvent obeys a polynomial bound, as in situations of no trapping or mild trapping, the next terms are additional zero energy contributions: see \cite{cdy}. On the other hand, if there is stable trapping, then resonances approach the real axis quickly, and they should contribute next; see Theorem 7.20 of \cite{dz} for a related result.

Prior work on this problem has obtained the result of Theorem \ref{t:intro} under assumptions that require regularity at zero energy which rules out terms $u_d$ and $u_z$, for example requiring $P$ to be a Laplace--Beltrami operator with Dirichlet or Neumann (not Robin) boundary conditions \cite{burq98}, or with a spectral cutoff away from zero energy \cite{burq02}; see \cite{mos,shapiro24,gh}  for some recent results and references.

Wave decay results similar to ours have been much studied for decades. The field is too wide-ranging to survey here. Let us mention the seminal work of Morawetz \cite{m61}, and the surveys and works in \cite[Epilogue]{lp89}, \cite[Chapter X]{va}, \cite{dr}, \cite{tat}, \cite{dz}, \cite{vasy}, \cite{sch21}, \cite{hh}, \cite{klainerman}, \cite{lo24}, \cite{lsv}.

\subsection*{Structure of the paper}
In Section \ref{sec:thm2}, we prove our main abstract theorem on the asymptotic expansion of solutions to the wave equation in a general setting. In Section \ref{sec:thm1}, we apply Theorem \ref{t:main} to prove Theorem \ref{t:intro} on the wave asymptotics for the operator $P$ in \eqref{eq:operator} with domain $\mathcal{D}$ in \eqref{eq:domain}. In Section \ref{sec:p-res}, we give examples of operators having $p$-resonances.  

\section{The Main Expansion}
\label{sec:thm2}

Let $P$ be  self-adjoint operator on a Hilbert space $\mathcal{H}$ with domain $\mathcal{D}$. We denote the resolvent of $P$ by $R(\lambda)=(P- \lambda^2)^{-1}$, mapping $\mathcal{H} \rightarrow \mathcal{H}$ whenever $\text{Im}\, \lambda>0$ and $\lambda^2$ is not an eigenvalue of $P$. For $s\geq 0$, define $\mathcal{D}^s = \langle P\rangle^{-s}\mathcal{H}$, with norm \[\|g \|_{\mathcal{D}^s}=\|\langle P \rangle^s \,g \|_{\mathcal{H}}, \qquad  \langle P\rangle =(P^2+I)^{1/2}.\]

We make the following assumptions:

\begin{enumerate}
    \item The spectrum of   $P$ consists of $[0,\infty)$ together with up to finitely many negative eigenvalues $E_1 < \cdots < E_n < 0$ with corresponding orthogonal projections $\Pi_1,\, \ldots \,, \Pi_n$.
    \item There exist positive constants $\lambda_0$, $C$, $C'$, and a bounded operator $\chi\colon \mathcal H \to \mathcal H$, such that $\chi R(\lambda)\chi$ continues analytically to
    \[\left\{\lambda \in \mathbb C \colon  |\text{Im}\, \lambda| < \frac{1}{C} e^{-C'|\text{Re}\,\lambda|},\, \text{Re}\, \lambda \neq 0\ \right\}. \] 
      Moreover, for some $p,q\geq0$, we have
    \[ \|\chi R(\lambda)\chi\|_{\mathcal{D}^p \rightarrow \mathcal{D}^q} \leq C|\lambda|^{-2}e^{C'|\text{Re}\, \lambda|}, \qquad \text{when }  |\text{Im}\, \lambda| < \frac{1}{C} e^{-C'|\text{Re}\,\lambda|}, \ \text{Re} \lambda \ne 0.\]
    \item There are operators $\chi A_{j,k}\chi:\mathcal{D}^p \to \mathcal{D}^q$, with $p,q$ as in Assumption (2), and numbers $\nu_j \in \mathbb{R}$, $b_{j,k} \in \mathbb{C} \backslash i[0,\infty)$, such that
    \[
    \chi R(\lambda)\chi = \sum_{j,k} \chi A_{j,k} \chi \lambda^{\nu_j} \log^k(b_{j,k}\lambda),
    \]
    where the sum $\sum_{j,k}$ converges absolutely and uniformly on compact subsets of 
    \[
    \left\{ \lambda \mid \arg \lambda \in \left( -\frac{\pi}{2},\frac{3 \pi}{2}\right), |\lambda| < \lambda_0 \right\}.
    \]    
 \end{enumerate}

 We consider the solution $u(t) = \frac{\sin(t\sqrt{P})}{\sqrt{P}} f$ of the wave equation  
\[
\begin{cases} 
(\partial_t^2 +P)u =0, \\
u|_{t=0}=0,\quad \partial_t u|_{t=0} = f.
\end{cases}
\]

  To analyze the solution, use the following contours. 
\begin{figure}[h]
  \centering
  \begin{overpic}[width=0.8\textwidth]{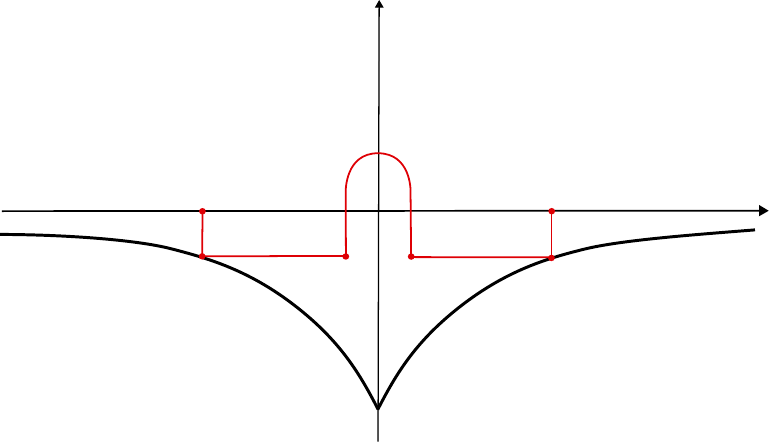}
    \put(66.5, 26){\small\text{$\Gamma_{1^+}$}}
    \put(27.2, 26){\small\text{$\Gamma_{1^-}$}}
    \put(59, 21){\small\text{$\Gamma_{2^+}$}}
    \put(36, 21){\small\text{$\Gamma_{2^-}$}}
    \put(52.5,37){\small \text{$\Gamma_\eta$}}
    \put(85, 23){\small\text{$-\gamma(\lambda)$}}
    \put(22.3, 32){\small\text{$-r(t)$}}
    \put(70, 32){\small\text{$r(t)$}}
    \put(45,24.1){\color{red}\linethickness{0.3mm}\polygon(0,0)(8.5,0)}
    \put(26.3,26.2){\color{red}\linethickness{0.4mm}\vector(0,-1){1}}
    \put(71.8,27.3){\color{red}\linethickness{0.4mm}\vector(0,1){1}}
    \put(34.7,24.3){\color{red}\linethickness{0.4mm}\vector(1,0){1}}
    \put(64,24.1){\color{red}\linethickness{0.4mm}\vector(1,0){1}}
    \put(50.1,24.1){\color{red}\linethickness{0.4mm}\vector(1,0){1}}
    \put(50.1,37.2){\color{red}\linethickness{0.4mm}\vector(1,0){1}}
    \put(47,20){\small \text{$\Gamma_{2,0}$}}
  \end{overpic}
  \caption{The contours of integration.}
  \label{fig:contour1}
\end{figure}

\noindent Here, $\Gamma = \Gamma _{1^\pm} \cup \Gamma_{2^\pm} \cup \Gamma_\eta$ where
\begin{align*}
    &\Gamma_{1^\pm} = \{ \pm r(t) +iy \mid - \gamma(r(t)) \leq y \leq 0\}, \\
    &\Gamma_{2^\pm} = \{\pm x -i \gamma(r(t)) \mid \eta < x< r(t) \},\\
    &\Gamma_{2,0} = \{\pm x -i \gamma(r(t)) \mid -\eta < x< \eta \},\\
    &r(t)=\frac{\log(t)}{A}, \qquad \gamma(\lambda) =  \frac 1 C e^{-C'|\text{Re}\, \lambda|},
\end{align*}
$C$ and $C'$ are as in Assumption (2), $A>C'$, 
and $\Gamma_\eta$ consists of the semicircle from $-\eta$ to $i\eta$ to $\eta$, together with two vertical line segments:
\begin{figure}[h]
  \centering
  \begin{overpic}[width=0.25\textwidth]{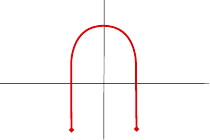} 
    \put(69, 2){\small\text{$\eta - i \gamma(r(t))$}}
    \put(-24, 2){\small\text{$-\eta - i \gamma(r(t))$}}  
    \put(55,54){\color{red}\linethickness{0.4mm}\vector(1,0){1}}
  \end{overpic}
  \caption{The contour $\Gamma_\eta$.}
  \label{fig:contour2}
\end{figure} 

\begin{theorem}\label{t:main}
    Fix $\chi$ as above such that $\chi:\mathcal{D}^q \to \mathcal{D}^q$ is bounded, and $s\geq 0$. If $\chi f = f \in \mathcal{D}^{p+s}$, then \[\chi u(t) = \chi (u_d(t) +u_z(t) +u_r(t)),\] where
    \begin{enumerate}
        \item $u_d$ is the contribution of the point spectrum: 
        \begin{equation}
            \label{eq:ud}
            u_d(t) = \sum_{\ell=1}^n  \frac{\sinh (\sqrt{-E_{\ell}} t )}{\sqrt{-E_{\ell}}}\Pi_{\ell} f.
        \end{equation}
        \item $u_z$ is the zero energy contribution: \[\chi u_z(t) = \frac{1}{2\pi} \sum_{j,k} \chi A_{j,k} \chi f \lim_{\eta \rightarrow 0^+} \int_{\Gamma_\eta} e^{-it \lambda} \lambda^{\nu_j} \log^k (b_{j,k}\lambda) d\lambda. \]
        \item $u_r$ is a logarithmic remainder: \[\|\chi u_r(t) \|_{\mathcal{D}^q} = O\left(\frac{1}{\log(t)^{2(s+p-q)+1}}\right) \|f\|_{\mathcal{D}^{p+s}}. \]
    \end{enumerate}
    The convergence in the zero energy contribution and the bound on the remainder are uniform for  $t\in [2,\infty)$.
\end{theorem}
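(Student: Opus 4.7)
The plan is to express $\chi u(t)$ as the sum of an eigenvalue contribution $\chi u_d(t)$ plus a contour integral of $e^{-it\lambda}\chi R(\lambda)\chi f$ just above the real axis, then to deform this contour downward into $\Gamma$ using the analytic continuation from Assumption (2). Starting from the functional calculus, I would write
\[
\chi u(t) = \chi u_d(t) + \frac{1}{2\pi}\int_{\mathbb{R}+i0^+}\bigl(e^{-it\lambda}-e^{it\lambda}\bigr)\chi R(\lambda)\chi f\,d\lambda,
\]
where $u_d$ collects the residues at the eigenvalue poles $\lambda = \pm i\sqrt{-E_\ell}$, yielding the hyperbolic sines of part (1).

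Next I would split the remaining integral into high- and low-frequency pieces by cutting at $|\text{Re}\,\lambda| = r(t) = \log(t)/A$. For the high-frequency tail, the spectral theorem combined with $\sin^2(t\sqrt\mu)/\mu \le 1/\mu$ gives
\[
\bigl\|\chi\,\mathbf{1}_{(r(t)^2,\infty)}(P)\,u(t)\bigr\|_{\mathcal D^q}^2 \lesssim \int_{r(t)^2}^\infty \frac{\langle\mu\rangle^{2q}}{\mu}\,d\|E_\mu f\|_{\mathcal H}^2 \lesssim r(t)^{-2-4(p+s-q)}\|f\|_{\mathcal D^{p+s}}^2,
\]
delivering the $O(\log(t)^{-2(s+p-q)-1})\|f\|_{\mathcal D^{p+s}}$ bound for this portion of $\chi u_r$. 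For the low-frequency piece I deform the contour from $\{\text{Im}\,\lambda = 0^+,\,|\text{Re}\,\lambda|\le r(t)\}$ down into $\Gamma = \Gamma_{1^\pm}\cup\Gamma_{2^\pm}\cup\Gamma_\eta$; this is permitted by Assumption (2), since $\Gamma$ lies in the continuation strip $\{|\text{Im}\,\lambda|<\gamma(\text{Re}\,\lambda),\ \text{Re}\,\lambda\ne 0\}$ precisely when $A > C'$. A symmetric contour in the upper half plane handles the $e^{+it\lambda}$ term.

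On $\Gamma_\eta$ I substitute the low-frequency expansion from Assumption (3), interchange sum and integral (justified by uniform convergence on compacta), and take $\eta\to 0^+$; the hypothesis $b_{j,k}\notin i[0,\infty)$ keeps each branch cut of $\log(b_{j,k}\lambda)$ away from $\Gamma_\eta$, so the limits exist termwise and produce the formula for $\chi u_z$ in part (2). On $\Gamma_{1^\pm}\cup\Gamma_{2^\pm}$, Assumption (2) bounds $\|\chi R(\lambda)\chi\|_{\mathcal D^p\to\mathcal D^q}\le C|\lambda|^{-2}e^{C'|\text{Re}\,\lambda|}\le C|\lambda|^{-2}t^{C'/A}$, while $|e^{-it\lambda}|\le e^{-t\gamma(r(t))} = \exp(-t^{1-C'/A}/C)$. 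Since $A > C'$, the exponential beats the polynomial, so these pieces contribute super-polynomially small amounts, absorbed into $\chi u_r$.

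The main obstacle will be proving the sharp tail bound with exponent $2(s+p-q)+1$: one must track carefully how $\langle P\rangle^q$ interacts with the spectral cutoff and the $\sin^2/\mu$ factor, especially near the boundary case $p+s=q$, where only $(\log t)^{-1}$ is available. A secondary subtlety is justifying the termwise $\eta\to 0^+$ limit in $\mathcal D^q$ norm rather than just pointwise, which uses the $\chi A_{j,k}\chi\colon\mathcal D^p\to\mathcal D^q$ boundedness uniformly in $(j,k)$. Finally, one must verify that the deformation matches cleanly at the joints $\pm r(t)-i\gamma(r(t))$, so that the pieces of the original integral left on the real axis at $|\text{Re}\,\lambda|>r(t)$ do not re-introduce losses beyond those already absorbed into $u_r$.
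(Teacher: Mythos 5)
Your proposal is correct and follows essentially the same route as the paper: split off the point spectrum, cut the Stone-formula integral at $|\mathrm{Re}\,\lambda|=\log(t)/A$, bound the high-frequency tail by the spectral theorem with the sup of $\langle\mu\rangle^{q-p-s}\mu^{-1/2}$, deform the low-frequency piece onto $\Gamma_{1^\pm}\cup\Gamma_{2^\pm}\cup\Gamma_\eta$ using Assumption (2), and read off $u_z$ from Assumption (3) on $\Gamma_\eta$. The only (equivalent) cosmetic difference is that you reflect the $e^{it\lambda}$ half of the sine and deform it upward, whereas the paper keeps that half as $e^{-it\lambda}\chi R(-\lambda+i0)\chi$ and deforms it downward across the rectangle where that branch is analytic.
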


\begin{proof}
    Fix $f\in \mathcal{D}^{p+s}$ with $\chi f=f$ and take $g \in \mathcal{D}^p$ such that $f=\langle P \rangle^{-s}g$. 
    Using $\lambda^2$ as the spectral parameter of the operator $P$, and considering the point spectrum of $P$, by functional calculus,
    \begin{align}\label{eq:lgsm}
        \frac{\sin(t\sqrt{P})} {\sqrt{P}}f
        &= \frac{\sin(t\sqrt{P})} {\sqrt{P}}\langle P\rangle^{-s}g \nonumber \\
        &= \mathbf{1}_{[0,r(t)^2]} (P) \frac{\sin(t\sqrt{P})}{\sqrt{P}}\,\langle P\rangle^{-s}g + \mathbf{1}_{(r(t)^2,\infty)} (P) \frac{\sin(t\sqrt{P})} {\sqrt{P}}\,\langle P\rangle^{-s}g + u_d(t)
    \end{align} 
with $u_d(t)$ given by \eqref{eq:ud}. We begin by estimating the second term. Again using the functional calculus,
    \begin{align*}
        &\left \|\langle P \rangle^{-q} \langle P \rangle^q \left (\mathbf{1}_{(r(t)^2,\infty)} (P) \frac{\sin(t\sqrt{P})} {\sqrt{P}}\,\langle P\rangle^{-s}\right) \langle P \rangle^{-p} \langle P \rangle^p\right \|_{\mathcal{D}^{p} \to \mathcal{D}^q} \\
        &\leq \left\| \langle P \rangle^q \left (\mathbf{1}_{(r(t)^2,\infty)} (P) \frac{\sin(t\sqrt{P})} {\sqrt{P}}\,\langle P\rangle^{-s}\right) \langle P \rangle^{-p}\right \|_{\mathcal{H} \to \mathcal{H}}\\
        &\leq \sup_{r(t) < \lambda < +\infty} \left |\langle \lambda^2\rangle^q \left(\frac{\sin\, (t \lambda)}{\lambda} \right) \langle \lambda^2\rangle ^{-s}\langle \lambda^2\rangle^{-p} \right|\\
        &\leq \sup_{r(t) < \lambda < +\infty} \left | \frac{\sin\, (t \lambda)}{\lambda^{2(s+p-q)+1}} \right | \\
        &\leq \frac{1}{r(t)^{2(s+p-q)+1}}
    \end{align*}
    
    \noindent where in the first inequality, we used the fact that $\| \langle P\rangle^{-q} \|_{\mathcal{H} \to \mathcal{D}^q}=1$ and $\|\langle P\rangle^p\|_{\mathcal{D}^p \to \mathcal{H}}=1.$ 
    Thus we deduce \[
    \left\| \mathbf{1}_{(r(t)^2,\infty)} (P) \frac{\sin(t\sqrt{P})} {\sqrt{P}}\,\langle P\rangle^{-s}g \right\|_{\mathcal{D}^q}  = O\left(\frac{1}{\log(t)^{2(s+p-q)+1}}\right)  \|f\|_{\mathcal{D}^{p+s}}.  
    \]

     Now we turn to the first term of \eqref{eq:lgsm}. Using Stone's formula and a change of variables \[\mathbf{1}_{[0,r(t)^2]} (P) \frac{\sin(t\sqrt{P})}{\sqrt{P}}\,\langle P\rangle^{-s}g =\frac{1}{2\pi} \int_{-r(t)}^{r(t)} e^{-it \lambda} (R(\lambda +i0) - R(-\lambda +i0))\,\langle \lambda^2\rangle^{-s}g \,d\lambda. \]
    
    \noindent Multiplying on the left by $\chi$
    and using its mapping properties and that
    $\chi f=f$
    \[\chi \left(\mathbf{1}_{[0,r(t)^2]} (P) \frac{\sin(t\sqrt{P})}{\sqrt{P}}\,\langle P\rangle^{-s}g\right)=\frac{1}{2\pi} \int_{-r(t)}^{r(t)} e^{-it \lambda} \chi(R(\lambda +i0) - R(-\lambda +i0))\chi \,\langle \lambda^2\rangle^{-s}g \,d\lambda. \]

    \noindent To have analyticity of the function $\langle \lambda^2 \rangle$ we use the branch cut  $e^{\pm i\frac{\pi}{4}}[1,\infty)\cup e^{\pm i\frac{3\pi}{4}}[1,\infty)$ and work on the strip $\left\{\lambda \mid \text{Re}\,\lambda \in \mathbb{R}, -\frac{{1}}{2} < \text{Im}\,\lambda < \frac{1}{2}\right\}.$  \\

    \noindent We estimate the contribution of $R(-\lambda +i0)$ by shifting the integral downward to the contour $\Gamma_{1^-}\cup \Gamma_{2^-}\cup \Gamma_{2,0} \cup \Gamma_{2^+} \cup \Gamma_{1^+}$ of Figure \ref{fig:contour1}; for this term there is no problem with the negative imaginary axis, as the integrand is analytic in the open rectangle from $\lambda = (-r(t),-\gamma(r(t))$ to $(r(t),0)$. This yields
    \begin{align*} 
    &\left\| \int_{-r(t)}^{r(t)} e^{-it \lambda} \chi R(-\lambda+i0) \chi \langle \lambda^2 \rangle ^{-s} g\, d\lambda \right\|_{\mathcal{D}^q}  \\ 
    &\leq  \left\| \int_{-r(t)}^{r(t)} e^{-it(\lambda - i\gamma(r(t))} \chi R(-\lambda +i \gamma(r(t)) \chi\, \langle (\lambda-i\gamma(r(t)))^2 \rangle ^{-s} g d\lambda \right\|_{\mathcal{D}^q} + \left\| \int_{\Gamma_1\pm} \right\|_{\mathcal{D}^q}
    \end{align*}

    \noindent Considering just the first term in the right hand side and letting the constant $C$  change from line to line, we get
    \begin{align*}
        &\left\| \int_{-r(t)}^{r(t)} e^{-it(\lambda - i\gamma(r(t))} \chi R(-\lambda +i \gamma(r(t)) \chi\, \langle (\lambda-i\gamma(r(t)))^2 \rangle ^{-s} g d\lambda \right \|_{\mathcal{D}^q} \\
        &\leq e^{-t \gamma(r(t))} \int_{-r(t)}^{r(t)} \|\chi R(-\lambda + i\gamma (r(t))) \chi \|_{\mathcal{D}^p \to \mathcal{D}^q} \, \left|\langle (\lambda-i\gamma(r(t)))^2\rangle\right|^{-s}\, \|g \|_{D^p} \,d \lambda\\
        &\leq e^{-t \gamma(r(t))} \int_{-r(t)}^{r(t)} Ce^{C'| \text{Re}\, (-\lambda +i\gamma (r(t)))|} \,  d\lambda \, \|g\|_{D^p} \\
        &\leq e^{-t \gamma(r(t))}\, C e^{C'r(t)} \, r(t) \,\|f\|_{\mathcal{D}^{p+s}}.
    \end{align*}
    
    \noindent where the second inequality uses the fact that $|\langle \lambda^2 \rangle| \geq \frac{1}{2}$ which can be obtained by writing $|1+\lambda^4| \geq |\text{Re}(1+\lambda^4)|$, completing squares with respect to $\text{Re}\, \lambda$ on the right-hand side and using $-\frac{1}{2} < \text{Im}\, \lambda <\frac{1}{2}$. The last inequality comes from the definition of $g$. 
    Thus by our choice of $A>C'$, we guarantee that this term decays faster than any polynomial.
    
    \noindent Turning to the contributions of $\Gamma_{1^\pm}$, consider the component $\Gamma_{1^+}=\{r(t) \times i[-\gamma (r(t)),0]\}$,
    \begin{align*}
        &\left\| \int_{-\gamma (r(t))}^{0} e^{-it(r(t)+iy)} \chi R(-r(t) -iy) \chi\, \langle (r(t)+iy)^2 \rangle^{-s} g dy \right\|_{\mathcal{D}^q} \\
        &\leq Ce^{C'r(t)} \int_{-\gamma (r(t))}^{0} e^{ty}\, |\langle (r(t)+iy)^2 \rangle|^{-s} dy\, \|g\|_{\mathcal{D}^p} \\
        &\leq Ce^{C'r(t)} \int_{-\gamma (r(t))}^{0} e^{ty} dy\, \|g\|_{\mathcal{D}^p}  \\
        &\leq \frac{1}{t} \left(1-e^{-t\gamma(r(t))}\right) Ce^{C'r(t)} \|f\|_{\mathcal{D}^{p+s}}.
    \end{align*}
    \noindent After simplification we see that this term decays as fast as $t^{\frac{C'}{A}-1}$, 
     so it  has a bound as claimed for 
    $u_r(t)$. The same argument shows the same estimate for $\Gamma_{1^-}$.

    We are left to consider the contribution of $R(\lambda +i0)$. We perform a contour deformation of the interval $(-r(t),r(t))$  to the contour $\Gamma_{1^-}\cup \Gamma_{2^-}\cup \Gamma_{\eta} \cup \Gamma_{2^+} \cup \Gamma_{1^+}$ of Figure \ref{fig:contour1}; for this term we avoid the negative imaginary axis as the integrand is not necessarily analytic there. This yields
    \begin{align*}
        \int_{-r(t)}^{r(t)} e^{-it \lambda} & \chi R(\lambda +i0) \chi f\, d\lambda =
        \int_{\Gamma_{2^\pm}} + \int_{\Gamma_\eta} + \int_{\Gamma_{1^\pm}}\\
    \end{align*}

    \noindent Over $\Gamma_{2^+}$,
    proceeding as in the case of $R(-\lambda +i0)$, using $|\langle \lambda^2 \rangle| \geq \frac{1}{2}$, we obtain
    \[
    \Big\|\int_{\Gamma_{2^+}}  e^{-it \lambda}  \chi R(\lambda +i0) \chi f\, d\lambda\Big\|_{\mathcal{D}^q} \le e^{-t \gamma(r(t))} C e^{C'r(t)} (r(t) - \eta)  \|g\|_{\mathcal{D}^p}.
    \]
    Taking $\eta$ to $0^+$, as in the estimate for $R(-\lambda+i0)$, the last inequality decays faster than any polynomial. The same argument works for the integral over $\Gamma_{2^-}$. \\

    \noindent The estimates for the integral over $\Gamma_{1^\pm}$ 
 follow exactly as our estimates for the corresponding term with $R(\lambda-i0)$.  Thus 
    far, each of the terms we have considered from \eqref{eq:lgsm} has contributed to the remainder,
    and all that remains is 
    the integral over $\Gamma_\eta$ in the limit $\eta \rightarrow 0^+$.
    That is,
    \[
    \chi \left(\frac{\sin(t\sqrt{P})} {\sqrt{P}}f - u_d(t)\right)=\frac{1}{2\pi} \lim_{\eta
    \to 0^+} \int_{\Gamma_\eta} e^{-it \lambda} \chi R(\lambda)\chi \,f\,d\lambda + O\left(\frac{1}{\log (t)^{2(s+p-q)+1}}\right) \|f\|_{\mathcal{D}^{p+s}}.
    \]

    \noindent Using Assumption 4
    we can write the integral over $\Gamma_\eta$ as the expression for $\chi u_z(t)$ in the statement of the Theorem.
\end{proof}

\section{Applications of the Main Expansion} 
\label{sec:thm1}

To prove Theorem \ref{t:intro}, we first check the hypotheses of Theorem \ref{t:main}. The high energy bounds on the resolvent in the Assumption (2) follows from Theorem 3 of \cite{burq02}, in the same way that Proposition 8.1 of \cite{burq02} does. Then assumptions (1),  the low energy part of (2), and (3) follow from Theorem 1 of \cite{cd2}.

 To facilitate the discussion in the proof of Theorem \ref{t:intro}, we introduce, for $l\in \mathbb{Z}$, the spaces
$$\mathcal{G}_l=\{ u\in \mathcal{D}_{\operatorname{loc}}: \; Pu=0\;\text{and} \; u(x)=O(|x|^{-l})\; \text{as $|x|\rightarrow \infty$}\}.$$
Notice that an element of $\mathcal{G}_{-1}\setminus \mathcal{G}_{-2}$ is a $p$-resonant state, and a nonzero element of $\mathcal{G}_{-2}$ is an eigenfunction.

\begin{proof}[Proof of Theorem \ref{t:intro}]
Part (i) follows directly from (1) of Theorem \ref{t:main}. For part (ii), recall from \cite[Section~3.4]{cdy} that, for some $M \in \{0, 1,2\}$, near $\lambda =0$ we have
\begin{equation}
\label{eq:soex}
    R(\lambda)=A_{-2,0}\lambda^{-2}+ \left(\sum_{m=1}^M \frac{U_{\omega_m}\otimes U_{\omega_m}}{\log (\lambda b_{-2,-1,m})}\right)\lambda^{-2} + A_{0,1} \log \lambda + \sum_{j=0}^\infty \sum_{k=-\infty}^{k_0(j)} A_{2j,k}\lambda^{2j} (\log \lambda)^k.
\end{equation}
Here $U_{\omega_m}\otimes U_{\omega_m} f= U_{\omega_m}\int f \overline{U}_{\omega_m}dx $, $U_{\omega_m}\in \mathcal{G}_{-1}\setminus \mathcal{G}_{-2}$ and 
$\arg b_{-2,-1,m}=-\pi/2$.
Moreover, $A_{-2,0}f= -\Pi_0f \in \mathcal{G}_{-2}, $ 
$A_{0,1}$ maps $L^2_c(\Real^2)$ to the span of $\mathcal{G}_{-2}\setminus \mathcal{G}_{-3}$ and $\mathcal{G}_{0}\setminus\mathcal{G}_{-1}$, $k_0(j)<\infty$, and $k_0(0)=0.$ The first term in the equation \eqref{eq:uz} follows from using the residue theorem on the term $A_{-2,0}\lambda^{-2}$ in the expansion \eqref{eq:soex}. For the second term in equation \eqref{eq:uz}, we set
\begin{equation}\label{eq:Jm}
\mathfrak{J}_m(t) =\lim_{\eta \rightarrow 0^+}\frac{1}{2\pi}\int_{\Gamma_{\eta}}e^{-it\lambda}\lambda^{-2}(\log (\lambda b_{-2,-1,m}))^{-1} d\lambda, \quad 0<m\leq M.
\end{equation}
By \cite[Lemma~2.3]{cdy}, we have $\mathfrak{J}_m(t)= t( (\log t)^{-1}+O((\log t)^{-2}))$ as $t\rightarrow \infty$. Note that although the contour here is slightly different from the contour in \cite[Lemma~2.3]{cdy}, the difference will only cause an exponentially small error. By \cite[Proposition 5.12]{cd2}, $\mathfrak{J}_1 = \mathfrak{J}_2$ whenever $V$ is radial. The  contribution of the rest of the terms in the expansion \eqref{eq:soex} decay faster than the term $u_r$ in Theorem \ref{t:intro} by \cite[Lemma~2.3]{cdy}. 

To prove equation \eqref{e:ur} in part (iii) using part (3) of Theorem \ref{t:main}, note that if $\chi \in C_c^\infty(\Omega)$, then, by interior ellipticity of $P$, 
\[
\|\chi u\|_{\mathcal D^q} \lesssim \|\chi u\|_{H^{q/2}(\Omega)}  \lesssim \|\chi u\|_{\mathcal D^q} ,
\]
for all $u$, where the implicit constants depend on $\chi$, and similarly, by Sobolev embedding,
\[
\|\chi u\|_{H^k(\Omega)} \lesssim \|\chi u\|_{C^k(\Omega)}  \lesssim \|\chi u\|_{H^s(\Omega)} ,
\]
as long as $s>k+1$. 
\end{proof}

\section{Examples with $p$-resonance} 
\label{sec:p-res}

This section constructs five different classes of operators which have $p$-resonances.  Moreover, each fits into the general framework of \cite{cd2}, so that 
the existence of low-energy resolvent expansions is essentially immediate.  For the first three, we make no effort to construct the most general operators possible within the class.   In each of these three, our technique is first to construct what will be our $p$-resonant state, and then use this to finish construction of the operator.  The first three examples satisfy the hypotheses of Theorem \ref{t:intro} (with appropriate assumptions on $M$ for the second), and the last two satisfy the hypotheses of Theorem \ref{t:main}.  
\subsection{Variable wave speed Schr\"{o}dinger operator}\label{ss:vws}
We begin with the operator $-c(x)^2 \Delta$, where the (possibly variable) wave speed  $c\in C^\infty(\Real^2) $ satisfies $0<c_m\leq c(x)\leq c_M<\infty$.   
Assuming that there is a $c_0>0$ so that $c(x)=c_0$ outside of a compact set allows the immediate application of the results of \cite{cd2}, but is stronger than necessary for the construction of an 
operator with a $p$-resonance.
We construct a compactly supported smooth potential $V$ so that $-c^2\Delta +V$ has a p-resonance at $0$. 

Let $a_0>0$ be fixed, and let  $\chi \in C_c^\infty(\Real^2)$ so that $\chi(x)$ is equal to $1$ in a neighborhood of the origin and $0\leq \chi (x)\leq 1$ for all $x$.  We shall 
make one  additional requirement on $\chi$ below.
Write $x=(x_1,x_2)$, and set 
$$u_p(x)= (1-\chi(x))\frac{x_1}{|x|^2}+ \chi(x) a_0x_1.$$
Note
\begin{equation} \label{eq:deltaup}
-\Delta u_p=  (\Delta \chi)\left( \frac{x_1}{|x|^2}- a_0x_1\right) + 2 (\partial_{x_1}\chi)\left( \frac{x_2^2-x_1^2}{|x|^4} - a_0\right)+2 (\partial_{x_2}\chi) \left( \frac{2x_1 x_2}{|x|^4}\right).
\end{equation}
We want to ensure that $ \Delta u_p$ vanishes wherever $u_p$ vanishes.  Notice that by our choices of $a_0$ and $\chi$, $u_p(x)=0$ if and only if $x_1=0$.  Moreover,
$$(\partial_{x_1}  u_p)(0,x_2)= (1-\chi (0,x_2))\frac{1}{x_2^2}+\chi(0,x_2)a_0>0$$
so that $u_p$ vanishes only to first order on $\{x_1=0\}$.
To ensure that 
$\Delta u_p$ vanishes at $x_1=0$, we need only to require  $(\partial_{x_1}\chi)(0,x_2)=0$.  This is certainly possible, for example, by taking $\chi$ to depend only on $x_2$ in a small neighborhood of $\{x_1=0\}$.  Then set 
$$V= c^2 \frac{\Delta u_p}{u_p}$$
and note $(-c^2\Delta +V)u_p=0$.
By our  construction of $u_p$, $V$ is a smooth, compactly supported real-valued function.  Notice that we do not require symmetry conditions on $c$ or $\chi$.

\subsection{Schr\"{o}dinger operators on manifolds}
Let $(M,g)$ be a two-dimensional smooth Riemannian manifold so that $M$ can be written as the union of a compact set and a set isomorphic to the complement of a ball in $\Real^2$ in a sense
described below.  We shall demonstrate how to construct a potential $V\in C_c^\infty(M)$ so that $-\Delta_g+V$ has a $p$-resonance.  Here $-\Delta_g\geq 0$ is the Laplacian on $(M,g)$.

Denote by $B(x_0,\rho)$ the open ball of radius $\rho$ and center $x_0$ in $\Real^2$.  Suppose $M\simeq K\sqcup \Real^2\setminus \overline{B}((4\rho,0), \rho)$ for some compact set $K\subset M$ and some $\rho>0$.  Moreover, suppose
$g\upharpoonright_{M\setminus K}=g_0$, where $g_0$ is the usual Euclidean metric.  We  identify points in $\Real^2\setminus \overline{B}((4\rho,0), \rho) $ with points in $M\setminus K$.
 In a similar manner as in Section \ref{ss:vws}, we construct a function $u_p$ which shall be our $p$-resonant state.  Let $\chi_K\in C_c^\infty(M)$, $0\leq \chi_K\leq 1$, be $1$ in a neighborhood of $K$ and $0$ on 
 $\{x=(x_1,x_2)\in \Real^2: x_1<2\rho\}$.  Let $\chi_0\in C_c^\infty(M)$, $0\leq \chi_0\leq 1$  be $1$ on $B(0,\rho/2)$ and $0$ outside $B(0,\rho)$.   Moreover, require that $\partial_{x_1}\chi_0\upharpoonright_{x_1=0}=0$.
  Now let $a_0,\; a_K>0$ be constants, and set 
 $$u_p= (1-\chi_0-\chi_K)\frac{x_1}{|x|^2}+ a_0\chi_0 x_1 + a_K\chi_K.$$
 Then, as in Section \ref{ss:vws}, our choices of $\chi_0$, $\chi_K$, $a_0$ and $a_K$ ensure that 
 $$\{x\in M: u_p(x)=0\} \subset \{x=(x_1,x_2)\in M\setminus K:  x_1=0\}\subset M.$$  Moreover, again as in Section \ref{ss:vws}, the condition that 
 $\partial_{x_1}\chi_0\upharpoonright_{x_1}=0$ ensures that $\Delta_g u_p/u_p$ is smooth.  Now define $V:=(\Delta_g u_p)/u_p$, and $(-\Delta_g+V)u_p=0$.

\subsection{Robin boundary conditions}  For a large class of obstacles $\mco$ in $\Real^2$ we construct a boundary condition so that $-\Delta$ on $\Real^2 \setminus \overline{\mco}$ has a $p$-resonance at $0$.

Let $\mco \subset \Real^2$ be an open set with smooth boundary so that  $0\in \mco$, $\overline{\mco}$ is compact, and $\Real^2 \setminus \overline{\mco}$ is connected.   
We shall put an additional requirement on 
$\mco$ below.  Set $u_p(x)= x_1/|x|^2$, so that $u_p$ is smooth on $\Real^2 \setminus \overline{\mco}$ and $-\Delta u_p=0$ there.  Let $\partial_\nu$ denote differentiation with respect to
the outward pointing (to $\Real^2 \setminus \overline{\mco}$) unit normal vector on $\partial \mco$.  We want $(\partial_\nu u_p)/u_p\in C^\infty(\partial \mco)$.  This holds if $\partial \mco$ has a horizontal tangent at its intersections with  the $x_2$ axis, as in that case $\partial_\nu u_p$ vanishes to at least first order on the set on which  $u\upharpoonright_{\partial \mco}$  vanishes. With this assumption, 
$\sigma:= (\partial_\nu u_p)/u_p\in C^\infty(\partial \mco)$ and the operator $-\Delta $ on $\Real^2 \setminus \overline{\mco}$ with domain $\{u\in H^2(\Real^2 \setminus \overline{\mco}):\; \partial_\nu u= \sigma u\}$ has a $p$-resonance at $0$.

A simple example is the case $\mco$ is a ball of radius $\rho$, for which $\sigma=1/\rho$ provides a function for which the Robin problem has a $p$-resonance.  In fact, this operator has two linearly
independent $p$-resonant states, $x_1/|x|^2$ and $x_2/|x|^2$.

\subsection{Schr\"odinger operators with round wells}
We show that compactly supported radial potentials
\begin{equation}
    \label{eq:potential}
     V(r) = -a^2 \mathbf{1}_{\{r < R\}}, \quad a>0
\end{equation}
given by the characteristic function of a disc can admit a $p$-resonant state; see also \cite{cdg}. 

For a solution with angular momentum $ m \in \mathbb{Z} $, we separate variables as $ u(r, \theta) = f(r)e^{im\theta} $. The equation $-\Delta u + Vu =0$ becomes:
\begin{equation}
\label{eq:radial}
    -f''(r) - \frac{1}{r}f'(r) + \left(\frac{m^2}{r^2} + V(r)\right)f(r) = 0.
\end{equation}
For $ m \neq 0 $, the effective potential outside the disc $D_R=\{r<R\}$ is given by $ \frac{m^2}{r^2} $. Equation \eqref{eq:radial} then becomes:
\[
f''(r) + \frac{1}{r}f'(r) - \frac{m^2}{r^2}f(r) = 0.
\]
with general solutions: 
\[
f(r) = A r^{|m|} + B r^{-|m|}.
\]
For $p$-resonant states, we require $ A = 0 $ and $ |m| = 1 $. This gives
\[
f(r) = \frac{B}{r} \quad \text{for } r \ge R.
\]
For $ r < R $ the equation becomes:
\[
f''(r) + \frac{1}{r}f'(r) - \frac{1}{r^2}f(r) + a^2 f(r) = 0.
\]
This is the Bessel equation with $L^2$ solutions $ f(r) = C J_1(ar) $, where $ J_1 $ is the Bessel function of the first kind $J_{\nu}$ with the order $\nu=1$. 

Now we consider the matching conditions at $ r = R $ which are
\begin{equation}
    f(R-)=f(R+), \quad f'(R-) = f'(R+).
\end{equation}
This yields
   \[
   C J_1(aR) = \frac{B}{R}, \quad C a J_1'(aR) = -\frac{B}{R^2}.
   \]
   Using the recurrence relation $ J_1'(z) = J_0(z) - \frac{J_1(z)}{z} $ from \cite[10.6.2]{DLMF} these conditions reduce to
   \[
   J_0(aR) = 0.
   \]
The condition is satisfied when $ a R = j_{0,n} $, where $ j_{0,n} $ are zeros of the of the Bessel function $ J_0 $. Thus, for 
   \[
   a = \frac{j_{0,n}}{R}, \quad n\in \mathbb{N}
   \]
in the potential \eqref{eq:potential}, the Schr\"odinger operator has $p$-resonant states given by
\begin{equation*}
    u(r,\theta) =
    \begin{cases} 
C J_1(ar) e^{\pm i\theta}, & r < R, \\
 r^{-1} e^{\pm i\theta}, & r \geq R
\end{cases}
\quad \text{ with } C = 1/(R J_1(aR)).
\end{equation*}

\begin{rem}
    When $a^2<0$, there are no $p$-resonances. This is because \eqref{eq:radial} is now a modified Bessel equation within the disc, with solutions given by modified Bessel functions $I_1$. The same argument reduces to the zeros of modified Bessel functions $I_0$, which has no real zeroes \cite[\S 10.42]{DLMF}. 
\end{rem}
Assumption (2) follows from the proof of \cite[Theorem~2.10]{dz} on the high-frequency behavior of the resolvent and the free resolvent bound in \cite[Section~2]{gs}. Assumption (3) follows from \cite{cd2}. Hence, we can apply Theorem \ref{t:main} to Schr\"odinger operators with the potential in \eqref{eq:potential} to obtain the corresponding wave decay rate in the presence of $p$-resonance.

\subsection{Schr\"odinger operators with delta-potential rings}
We show that Schr\"odinger operators with delta potential
\begin{equation}
\label{eq:delta-potent}
    V(r) = a \delta(r - R)
\end{equation}
supported in a circle admit $ p $-resonance states for suitable $a$. Using separation of variables $ u(r, \theta) = f(r)e^{im\theta} $, the equation becomes:
\begin{equation}\label{eq:dpot}
-f'' - \frac{1}{r}f' + \frac{m^2}{r^2}f + a \delta(r - R)f = 0.
\end{equation}
Since a $p$-resonant state must
in this case be a linear combination
of solutions of \eqref{eq:dpot} with 
$|m|=1$, 
using the local $L^2$-integrability, 
in order to have a $p$-resonant 
state $f$ must satisfy
\[
f(r) = 
\begin{cases} 
A r, & r < R, \\
B r^{-1}, & r > R.
\end{cases}
\]
It remains to consider matching conditions at $ r = R $. In particular, the continuity of $ f(r) $ at $r=R$ forces
\begin{equation}
\label{eq:cont}
     A R = B R^{-1}. 
\end{equation}
whereas the jump in derivatives due to the delta potential yields
\begin{equation}
    \label{eq:jump}
      f'(R+) - f'(R-) = a f(R).
\end{equation}
Note that by \eqref{eq:cont}, we have 
  \[
  f'(R^-) = A, \quad f'(R^+) = -B R^{-2} = -A.
  \]
Combining with the jump condition \eqref{eq:jump}, we obtain $p$-resonant states
  \[
u(r,\theta) = 
\begin{cases} 
r e^{\pm i\theta}, & r < R, \\
R^2 r^{-1} e^{\pm i\theta}, & r > R,
\end{cases}
\quad \text{ if } a = -\frac{2}{R}.
\]
Assumption (2) then follows from the resolvent bound in \cite[Lemma~7.1]{gs}; Assumption (3) follows from \cite{cd2}. This allows us to apply Theorem \ref{t:main} to Schr\"odinger operators with the potential in \eqref{eq:delta-potent}.


\begin{thebibliography}{0}

\bibitem[BrSa25]{bs} O. P. Bruno and M. A. Santana. \textit{Efficient time-domain scattering synthesis via
frequency-domain singularity subtraction}. Preprint, arXiv:2505.06189.

\bibitem[Bur98]{burq98} N. Burq. \textit{Décroissance de l'énergie locale de l'équation des ondes pour le problème extérieur et absence de résonance au voisinage du réel.} Acta Mathematica 180 (1998): 1--29.


\bibitem[Bur02]{burq02} N. Burq. \textit{Lower bounds for shape resonances widths of long range Schr\"odinger operators.} American Journal of Mathematics 124:4 (2002): 677--735.

\bibitem[ChDa25]{cd2} T. J. Christiansen and K. Datchev. \textit{Low energy resolvent expansions in dimension two}.  Communications of the American Mathematical Society. 5 (2025): 48--80. 


\bibitem[CDG25]{cdg} T. J. Christiansen, K. Datchev, and C. Griffin. \textit{Persistence and disappearance of negative eigenvalues in dimension two}. To appear in J. Spectral Theory. Preprint, arXiv:2401.04622.


\bibitem[CDY25]{cdy} T. J. Christiansen, K. Datchev, and M. Yang. 
\textit{From resolvent expansions at zero to long time wave expansions}. Communications in Partial Differential Equations. 50:4 (2025) 477--492.



\bibitem[DaRo13]{dr} M. Dafermos and I. Rodnianski. \textit{Lectures on Black Holes and Linear Waves.} Clay Mathematics  Proceedings 17 (2013) 105--205.


\bibitem[DGS23]{dgs} K. Datchev, J. Galkowski, and J. Shapiro. \textit{Semiclassical resolvent bounds for compactly supported radial potentials}. Journal of Functional Analysis 284 (2023) 109835.


\bibitem[DMST21]{dmst} K. Datchev, J. Metcalfe, J. Shapiro, and M. Tohaneanu. \textit{On the interaction of metric trapping and a boundary}. Proceedings of the American Mathematical Society. 149:9 (2021): 3801--3812.


\bibitem[DLMF]{DLMF}
NIST Digital Library of Mathematical Functions. http://dlmf.nist.gov/, Release 1.1.8 of 2022-12-15. F. W. J. Olver, A. B. Olde Daalhuis, D. W. Lozier, B. I. Schneider, R. F. Boisvert, C. W. Clark, B. R. Miller, B. V. Saunders, H. S. Cohl, and M. A. McClain, eds.

\bibitem[DyZw19]{dz} S. Dyatlov and M. Zworski.  \textit{Mathematical Theory of Scattering Resonances.} Grad. Stud. Math. 200. Amer. Math. Soc., 2019.

\bibitem[GaSm15]{gs} J. Galkowski, and H.F. Smith. \textit{Restriction bounds for the free resolvent and resonances in lossy scattering.} International Mathematics Research Notices 2015.16 (2015): 7473--7509.

\bibitem[GrHo24]{gh} O. Graf and G. Holzegel. \textit{Linear Stability of Schwarzschild-Anti-de Sitter spacetimes III:
Quasimodes and sharp decay of gravitational perturbations.} Preprint arXiv:2410.21994. 

\bibitem[Gra24]{gras} V. Grasselli. \textit{High frequency resolvent estimates for the magnetic Laplacian on non compact manifolds} Preprint hal-0451301, 2024.


\bibitem[HiHo22]{hh} P. Hintz and G. Holzegel. \textit{Recent progress in general relativity}. Proc. Int. Cong. Math. 5 (2022), 3924--3984.

\bibitem[HoSm14]{hs} G. Holzegel and J. Smulevici. \textit{Quasimodes and a lower bound on the uniform
energy decay rate for Kerr-AdS spacetimes}. Anal. PDE 7:5 (2014), 1057--1090.

\bibitem[Kei16]{keir} J. Keir. \textit{Slowly decaying waves on spherically symmetric spacetimes and ultracompact neutron stars}. Classical Quantum Gravity 33:13 (2016),  135009, 42.

\bibitem[Kla23]{klainerman} S. Klainerman. \textit{Columbia lectures on the stability of Kerr}. Preprint \url{https://www.math.columbia.edu/~staff/columbia2023.pdf}, 2023.

\bibitem[LSV25]{lsv} A. Larra\'in-Hubach, J. Shapiro, and G. Vodev. \textit{Expoenetial local energy decay of solutions to the wave equations with $L^\infty$ electric and magnetic potentials}. arXiv:2506.07058.

\bibitem[LaPh89]{lp89} P. D. Lax and R. S. Phillips. \textit{Scattering Theory: Revised Edition}. Academic Press, Inc. 1989.

\bibitem[LLR22]{llr} J. Le Rousseau, G. Lebeau, and L. Robbiano. \textit{Elliptic Carleman
Estimates and Applications to Stabilization and Controllability, Volume II}. PNLDE Subseries in Control, Vol. 98. Birk\"auser 2022.

\bibitem[LuOh24]{lo24} J. Luk and S. Oh. 
\textit{Late time tail of waves on dynamic asymptotically flat spacetimes of odd space dimensions.} Preprint arXiv:2404.02220 (2024).

\bibitem[MPS20]{mps20} C. J. Mero\~no, L. Potenciano-Machado and M. Salo, \textit{Resolvent estimates for the magnetic Schr\"odinger operator in dimensions $\ge$ 2,} Rev. Mat. Complut. 33 (2020), 619--641.

\bibitem[Mor61]{m61} C. S. Morawetz. \textit{The decay of solutions of the exterior initial-boundary value problem for the wave equation}. Communications on Pure and Applied Mathematics, 14 (1961), 561--568.

\bibitem[Mos16]{mos} G. Moschidis. \textit{Logarithmic Local Energy Decay for Scalar Waves
on a General Class of Asymptotically Flat Spacetimes}. Ann. PDE 2:1 (2016), 124 pages.

\bibitem[Obo24]{o24} D. Obovu. \textit{ Resolvent bounds for Lipschitz potentials in dimension two and higher with singularities at the origin}, J. Spectral Theory 14 (2024), 163--183.

\bibitem[Ral69]{r69}  J. V. Ralston. \textit{Solutions of the wave equation with localized energy}. Communications on Pure and Applied Mathematics,  22 (1969), 807--823. 

\bibitem[Sch21]{sch21} W. Schlag. \textit{On pointwise decay of waves}. J. Math. Phys. 62 (2021) 061509.

\bibitem[Sha24]{shapiro24} J. Shapiro. \textit{Semiclassical resolvent bounds for short range $L^\infty$ potentials with singularities at the origin.} Asymptotic Analysis 136:3-4 (2024), 157--180.

\bibitem[Tat13]{tat} D. Tataru. \textit{Local decay of waves on asymptotically flat stationary space-times} Amer. J. Math. 135:2 (2013) 361--401.

\bibitem[Vai89]{va} B. Vainberg. \textit{Asymptotic methods in equations of mathematical physics}. CRC Press, 1989.

\bibitem[Vas20]{vasy} A. Vasy. \textit{The black hole stability problem.} Current Developments in Mathematics 2020 (2020), 105--155.

\bibitem[Vod25]{v25} G. Vodev. \textit{Semiclassical resolvent estimates for the magnetic Schr\"odinger operator}. Preprint, arXiv:2501.07271.



\end{thebibliography}
\end{document}